\newtheorem{thm}{Theorem}[section]
\newtheorem{lem}[thm]{Lemma}
\theoremstyle{definition}
\theoremstyle{remark}
\newtheorem{rem}[thm]{Remark}
\numberwithin{equation}{section}
\newcommand{\beas}{\begin{eqnarray*}}
\newcommand{\eeas}{\end{eqnarray*}}
\newcommand{\bes} {\begin{equation*}}
\newcommand{\ees} {\end{equation*}}
\newcommand{\be} {\begin{equation}}
\newcommand{\ee} {\end{equation}}
\newcommand{\bea} {\begin{eqnarray}}
\newcommand{\eea} {\end{eqnarray}}
\newcommand{\ra} {\rightarrow}
\newcommand{\txt} {\textmd}
\newcommand{\ds} {\displaystyle}
\begin{document}

\title[\tiny{Segal-Bargmann transform and Paley-Wiener theorems on $M(2)$}]
 {Segal-Bargmann transform and Paley-Wiener theorems on $M(2)$}

\author{\tiny{E. K. Narayanan and Suparna Sen}}

\address{Department of Mathematics, Indian Institute of Science, Bangalore - 560012, India.}

\email{naru@math.iisc.ernet.in, suparna@math.iisc.ernet.in.}

\thanks{The first author was supported in part by a grant from UGC via DSA-SAP and the second author was supported by Shyama Prasad Mukherjee Fellowship from Council of Scientific and Industrial Research, India.}


\begin{abstract}
We study the Segal-Bargmann transform on $M(2).$ The range of this transform is characterized as a weighted Bergman space. In a similar fashion Poisson integrals are investigated. Using a Gutzmer's type formula we characterize the range as a class of functions extending holomorphically to an appropriate domain in the complexification of $M(2).$ We also prove a Paley-Wiener theorem for the inverse Fourier transform.

\vspace*{0.1in}

\begin{flushleft}
MSC 2000 : Primary 22E30; Secondary 22E45. \\
\vspace*{0.1in}
Keywords : Segal-Bargmann transform, Poisson integrals, Paley-Wiener theorem. \\
\end{flushleft}

\end{abstract}

\maketitle

\section{Introduction}

\noindent
Consider the following results from Euclidean Fourier analysis :

\indent
(I):A function $f \in L^2(\mathbb{R}^n)$ admits a factorization $f(x) = g*p_t(x)$ where $g \in L^2(\mathbb{R}^n)$ and $\ds{p_t(x) = \frac{1}{(4\pi t)^{\frac{n}{2}}} e^{\frac{-|x|^2}{4t}}}$ (the heat kernel on $\mathbb{R}^n$) if and only if $f$ extends as an entire function to $\mathbb{C}^n$ and  we have \\
$\ds{\frac{1}{(2\pi t)^{n/2}} \int_{\mathbb{C}^n} |f(z)|^2 e^{-\frac{|y|^2}{2t}} dx dy < \infty}$ $(z = x + iy).$ In this case we also have  $$ \| g\| _2 ^2 = \frac{1}{(2\pi t)^{n/2}} \int_{\mathbb{C}^n} |f(z)|^2 e^{-\frac{|y|^2}{2t}} dx dy.$$

The mapping $\ds{g \rightarrow g * p_t}$ is called the Segal-Bargmann transform and the above says that the Segal-Bargmann transform is a unitary map from $L^2(\mathbb{R}^n)$ onto $\ds{\mathcal{O}(\mathbb{C}^n) \bigcap L^2(\mathbb{C}^n, \mu)},$ where $\ds{d\mu(z) = \frac{1}{(2\pi t)^{n/2}} e^{-\frac{|y|^2}{2t}} dx dy }$ and $\mathcal{O}(\mathbb{C}^n)$ denotes the space of entire functions on $\mathbb{C}^n.$

\indent
(II) A function $f \in L^2(\mathbb{R})$ admits a holomorphic extension to the strip $\{ x + iy : |y| < t \}$ such that $$\ds{ \sup_{|y| \leq s} \int_\mathbb{R} |f(x+iy)|^2 dx < \infty ~ \forall s<t}$$ if and only if $$e^{s|\xi|} \tilde{f}(\xi) \in L^2(\mathbb{R}) ~ \forall ~  s < t$$ where $\tilde{f}$ denotes the Fourier transform of $f.$

\indent
(III) An $f \in L^2(\mathbb{R}^n)$ admits an entire extension to $\mathbb{C}^n$ such that $$|f(z)| \leq C_N (1 + |z|)^{-N} e^{R|Im z|} ~ \forall ~  z \in \mathbb{C}^n $$ if and only if $\tilde{f} \in \mathcal{C}_c^\infty(\mathbb{R}^n)$ and supp $\tilde{f} \subseteq \mathbb{B}(0,R), $ where $\mathbb{B}(0,R)$ is the ball of radius $R$ centered around $0$ in $\mathbb{R}^n.$

In this paper we aim to prove similar results for the non-commutative group $M(2) = \mathbb{R}^2 \ltimes SO(2).$ Some remarks are in order.

As noted above the map $\ds{g \rightarrow g * p_t}$ in (I) is called the Segal-Bargmann transform. This transform has attracted a lot of attention in the recent years mainly due to the work of Hall \cite{H} where a similar result was established for an arbitrary compact Lie group $K.$ Let $q_t$ be the heat kernel on $K$ and let $K_\mathbb{C}$ be the complexification of $K.$ Then Hall's result, Theorem 2 in \cite{H}, states that the map $f \rightarrow f * q_t$ is a unitary map  from $L^2(K)$ onto the Hilbert space of $\nu$-square integrable holomorphic functions on $K_\mathbb{C}$ for an appropriate positive $K$-invariant measure $\nu$ on $K_\mathbb{C}.$ Soon after Hall's paper a similar result was proved for compact symmetric spaces by Stenzel in \cite{S}. We also refer to \cite{HL}, \cite{HM}, \cite{KOS} and \cite{KTX} for similar results for other groups and spaces.

The second result above (II) is originally due to Paley and Wiener. Let \beas \mathcal{H}_t = \{ f \in L^2(\mathbb{R}), ~ f \txt{ has a holomorphic extension to } |Im z| < t \txt{ and } \\ \sup_{|y| \leq s} \int_\mathbb{R} |f(x+iy)|^2 dx < \infty ~ \forall ~  s < t \}. \eeas Then $\ds{\bigcup_{t>0} \mathcal{H}_t}$ may be viewed as the space of all analytic vectors for the regular representation of $\mathbb{R}$ on $L^2(\mathbb{R}).$ This point of view was further developed by R. Goodman (see \cite{G1} and \cite{G2}) who studied analytic vectors for representations of Lie groups. The theorem of Paley-Wiener (II) characterizes analytic vectors for the regular representation of $\mathbb{R}$ via a condition on the Fourier transform.

The third result (III) is the classical Paley-Wiener theorem. For a long time the Paley-Wiener theorem has been looked at as a characterization of the image (under Fourier transform) of $\mathcal{C}_c^\infty$ functions on the space you are interested in. Recently, using Gutzmer's formula Thangavelu \cite{T} has proved a Paley-Wiener type result for the inverse Fourier transform. See \cite{P} also for a similar result.

The plan of this paper is as follows : In the remaining of this section we recall the representation theory and Plancherel theorem of $M(2)$ and we prove the unitarity of the Segal-Bargmann transform. In the next section we study generalized Segal-Bargmann transform and prove an analogue of Theorem 8 and Theorem 10 in \cite{H}. Third section is devoted to a study of Poisson integrals on $M(2).$ This section is modeled after the work of Goodman \cite{G1} and \cite{G2}. In the final section we establish a Paley-Wiener type result for the inverse Fourier transform on $M(2).$

The rigid motion group $M(2)$ is the semi-direct product of $\mathbb{R}^2$ with $SO(2)$ (which will be identified with the circle group $S^1$) with the group law $$ (x,e^{i\alpha}) \cdot (y, e^{i\beta}) = (x + e^{i\alpha}y, e^{i(\alpha + \beta)}) \textmd{ where } x, y \in \mathbb{R}^2; e^{i\alpha}, e^{i\beta} \in S^1.$$

This group may be identified with a matrix subgroup of $GL(2,\mathbb{C})$ via the map $$ (x, e^{i\alpha}) \rightarrow \left ( \begin{matrix} e^{i \alpha} & x \\ 0 & 1 \\ \end{matrix} \right).$$

Unitary irreducible representations of $M(2)$ are completely described by Mackey's theory of induced representations. For any $\xi \in \mathbb{R}^2$ and $g \in M(2),$ we define $U_g^\xi$ as follows : $$ U_g^\xi F(s) = e^{i \langle x , s \xi \rangle} F(r^{-1}s) $$ for $g = (x,r)$ and $F \in L^2(S^1).$

It is known that $U_\xi$ is equivalent to $U_{\xi'}$ iff $|\xi| = |\xi'|.$ The above collection gives all the unitary irreducible representations of $M(2)$ sufficient for the Plancherel theorem to be true. The Plancherel theorem (see Theorem 4.2 of \cite{Su}) reads $$ \int_{M(2)} |f(g)|^2 dg = \int_{\mathbb{R}^2} \| \hat{f} (\xi) \|^2_{HS} d\xi $$ where $\hat{f} (\xi)$ is the 'group Fourier transform' defined as an operator from $L^2(S^1)$ to $L^2(S^1)$ by $$ \hat{f} (\xi) = \int_{M(2)} f(g) U_g^\xi dg.$$

Moreover, the group Fourier Transform $\hat{f}(\xi),$ for $\xi \in \mathbb{R}^2$ of $f \in L^1(M(2))$ is an integral operator with the kernel $k_f(\xi, e^{i\alpha}, e^{i\beta})$ where $$k_f(\xi, e^{i\alpha}, e^{i\beta}) = \tilde{f}(e^{i\beta}\xi, e^{i(\beta - \alpha)}),$$ and $\tilde{f}$ is the Euclidean Fourier Transform of $f$ in the $\mathbb{R}^2$-variable.

The Lie algebra of $M(2)$ is given by $ \left \{ \left ( \begin{matrix} i \alpha & x \\ 0 & 0 \\ \end{matrix} \right) : (x , e^{i\alpha}) \in M(2) \right \} .$ Let $$ \ds{X_1 = \left ( \begin{matrix} i & 0 \\ 0 & 0 \\ \end{matrix} \right) , ~~ X_2 = \left ( \begin{matrix} 0 & 1 \\ 0 & 0 \\ \end{matrix} \right) , ~~ X_3 = \left ( \begin{matrix} 0 & i \\ 0 & 0 \\ \end{matrix} \right) }$$

Then it is easy to see that $\ds{\{ X_1, X_2, X_3 \}}$ forms a basis for the Lie algebra of $M(2).$ The "Laplacian" $\ds{\Delta_{M(2)} = \Delta}$ is defined by $$\ds{\Delta = -(X_1^2 + X_2^2 + X_3^2).}$$ A simple computation shows that $\ds{\Delta = - \Delta_{\mathbb{R}^2} - \frac{\partial^2}{\partial \alpha^2}}$ where $\ds{\Delta_{\mathbb{R}^2}}$ is the Laplacian on $\mathbb{R}^2$ given by $\ds{\Delta_{\mathbb{R}^2} = \frac{\partial^2}{\partial x^2} + \frac{\partial^2}{\partial y^2}.}$ Since $\ds{\Delta_{\mathbb{R}^2}}$ and $\ds{\frac{\partial^2}{\partial \alpha^2}}$ commute, it follows that the heat kernel $\psi_t$ associated to $\Delta_{M(2)}$ is given by the product of the heat kernels $p_t$ on $\mathbb{R}^2$ and $q_t$ on $SO(2).$ In other words, $$ \psi_t (x, e^{i\alpha}) = p_t(x) q_t(e^{i\alpha}) = \frac{1}{4\pi t} e^{\frac{-|x|^2}{4t}} \sum_{n \in \mathbb{Z}} e^{-n^2 t} e^{in\alpha}.$$

Let $f \in L^2(M(2)).$ Expanding $f$ in the $SO(2)$ variable we obtain $$ f(x,e^{i\alpha}) = \sum_{m \in \mathbb{Z}} f_m(x) e^{im\alpha}$$ where $\ds{f_m(x) = \frac{1}{2 \pi} \int_{- \pi}^{\pi} f(x,e^{i \alpha}) e^{-im\alpha}} d\alpha$ and the convergence is understood in the $L^2$-sense. Since $p_t$ is radial (as a function on $\mathbb{R}^2$) a simple computation shows that $$ f * p_t (x, e^{i\alpha}) = \sum_{m \in \mathbb{Z}} f_m * p_t (x) e^{-m^2 t} e^{im\alpha}. $$

Let $\mathbb{C}^* = \mathbb{C} \backslash \{0\}$ and $\mathcal{H}(\mathbb{C}^2 \times \mathbb{C}^*)$ be the Hilbert space of holomorphic functions on $\mathbb{C}^2 \times \mathbb{C}^*$ which are square integrable with respect to $\mu \bigotimes \nu (z,w)$ where $$ d\mu(z) = \frac{1}{2 \pi t} e^{-\frac{|y|^2}{2t}} dx dy \textmd{ on } \mathbb{C}^2 $$ and $$ d\nu(w) = \frac{1}{2\pi} \frac{1}{\sqrt{2\pi t}} \frac{e^{-\frac{(\ln |w|)^2}{2t}}}{|w|^2} dw \textmd{ on } \mathbb{C}^*.$$ Using the Segal-Bargmann result for $\mathbb{R}^2$ and $S^1$ we can easily prove the following theorem:

\begin{thm}
If $f \in L^2(M(2)),$ then $f*p_t$ extends holomorphically to $\mathbb{C}^2 \times \mathbb{C}^*.$ Moreover, the map $f \rightarrow f * p_t$ is a unitary map from $L^2(M(2))$ onto $\mathcal{H}(\mathbb{C}^2 \times \mathbb{C}^*).$
\end{thm}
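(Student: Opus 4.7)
The plan is to reduce the statement to the Euclidean Segal-Bargmann theorems on $\mathbb{R}^2$ and on $S^1$ via the Fourier expansion of $f$ in the $SO(2)$ variable. First I would fix $f \in L^2(M(2))$ with Fourier decomposition $f(x,e^{i\alpha}) = \sum_m f_m(x) e^{im\alpha}$ and invoke the formula
$$ f * p_t(x, e^{i\alpha}) = \sum_{m \in \mathbb{Z}} (f_m * p_t)(x) \, e^{-m^2 t} e^{im\alpha}$$
established above, which identifies the $M(2)$-Segal-Bargmann transform of $f$ with a Fourier series whose $m$-th coefficient is, up to the weight $e^{-m^2 t}$, the $\mathbb{R}^2$-Segal-Bargmann transform of $f_m$. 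By assertion (I) each $f_m * p_t$ extends to an entire function on $\mathbb{C}^2$ with $\|f_m * p_t\|_{L^2(\mathbb{C}^2,\mu)} = \|f_m\|_2$. Replacing $x$ by $z \in \mathbb{C}^2$ and $e^{i\alpha}$ by $w \in \mathbb{C}^*$, I would set
$$ F(z,w) := \sum_{m \in \mathbb{Z}} (f_m * p_t)(z) \, e^{-m^2 t} w^m$$
and check that the factor $e^{-m^2 t}$ forces normal convergence of this Laurent series on compact subsets of $\mathbb{C}^2 \times \mathbb{C}^*$, giving the required holomorphic extension of $f * p_t$.

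The isometry would then follow from a Fubini-plus-orthogonality calculation. The angular part of $\nu$ makes the monomials $\{w^m\}_{m \in \mathbb{Z}}$ pairwise orthogonal, so
$$\|F\|^2_{\mu \bigotimes \nu} = \sum_{m \in \mathbb{Z}} \Big(\int_{\mathbb{C}^2} |(f_m * p_t)(z)|^2 \, d\mu(z)\Big) \Big(e^{-2 m^2 t} \int_{\mathbb{C}^*} |w|^{2m} \, d\nu(w)\Big).$$
The first factor equals $\|f_m\|_2^2$ by (I). The measure $\nu$ is precisely the image-side measure of Hall's Segal-Bargmann transform on $S^1$, for which $\{e^{-m^2 t} w^m\}_m$ is the image of the Fourier orthonormal basis; an elementary Gaussian integral in polar coordinates gives $\int_{\mathbb{C}^*} |w|^{2m}\, d\nu(w) = e^{2m^2 t}$, so the second factor equals $1$. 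Summing in $m$ and using Plancherel on $S^1$ yields $\|F\|^2 = \sum_m \|f_m\|_2^2 = \|f\|_{L^2(M(2))}^2$.

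For surjectivity, I would take $F \in \mathcal{H}(\mathbb{C}^2 \times \mathbb{C}^*)$, expand it as a Laurent series $F(z,w) = \sum_m F_m(z) w^m$ on $\mathbb{C}^2 \times \mathbb{C}^*$ (each $F_m$ entire in $z$), and run the above computation in reverse: the finiteness of $\|F\|^2$ with respect to $\mu \bigotimes \nu$ forces $e^{m^2 t} F_m \in \mathcal{O}(\mathbb{C}^2) \cap L^2(\mathbb{C}^2, \mu)$ and $\sum_m \|e^{m^2 t} F_m\|^2 < \infty$. The surjective direction of the $\mathbb{R}^2$ Segal-Bargmann theorem then yields $f_m \in L^2(\mathbb{R}^2)$ with $f_m * p_t = e^{m^2 t} F_m$, and $f(x, e^{i\alpha}) := \sum_m f_m(x) e^{im\alpha}$ is the desired preimage. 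The main technical care will go into the two term-by-term manipulations (Laurent expansion of $F$ and the interchange of sum and integral in the norm identity), both of which hinge on the super-exponential decay $e^{-m^2 t}$ to permit Fubini.
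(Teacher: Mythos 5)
Your proposal is correct, and it is essentially the argument the paper has in mind: the paper states this theorem without proof, remarking only that it follows ``easily'' from the Segal--Bargmann results on $\mathbb{R}^2$ and $S^1$, and the skeleton you use (Fourier expansion in the $SO(2)$ variable, the formula $f*p_t = \sum_m (f_m*p_t)\,e^{-m^2t}e^{im\alpha}$, normal convergence of the Laurent series from the $e^{-m^2t}$ decay, and the norm identity via orthogonality of $\{w^m\}$ under the $S^1$-invariant measure $\nu$) is exactly the one the authors carry out in detail for the generalized transform in Theorem 2.1. Your Gaussian computation $\int_{\mathbb{C}^*}|w|^{2m}\,d\nu(w)=e^{2m^2t}$ checks out (substitute $|w|=e^u$), so the weights cancel as you claim. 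The one place you genuinely diverge is surjectivity: you Laurent-expand a given $F\in\mathcal{H}(\mathbb{C}^2\times\mathbb{C}^*)$ and invert coefficientwise using the onto direction of the Euclidean theorem, whereas the paper's analogous argument in Theorem 2.1 instead shows the range is dense by testing $F$ against functions of the form $g*\psi(z)\,w^m$ and invoking the injectivity statements of Hall's Theorems 8 and 10. Your route is more constructive and produces the preimage explicitly; the paper's avoids justifying term-by-term integration of the Laurent series against $\mu\otimes\nu$, which in your version is the step requiring the most care (monotone/dominated convergence on expanding annuli, using that $\nu$ is $S^1$-invariant). Either way the argument closes.
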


\section{Generalizations of Segal-Bargmann transform}

In this section we study generalizations of the Segal-Bargmann transform and prove an analogue of Theorem 8 and Theorem 10 in \cite{H}.

Let $\mu$ be any radial real-valued function on $\mathbb{R}^2$ such that
\begin{itemize}
\item $\mu(x) > 0 ~~ \forall ~  ~ x \in \mathbb{R}^2$ and is locally bounded away from zero,
\item $ \ds{ \forall ~  x \in \mathbb{R}^2, \sigma(x) = \int_{\mathbb{R}^2} e^{2x.y} \mu(y) dy < \infty .}$
\end{itemize} Define, for $z \in \mathbb{C}^2$ $$ \psi(z) = \int_{\mathbb{R}^2} \frac{e^{ia(y)}}{\sqrt{\sigma(y)}} e^{-iy.z} dy,$$ where $a$ is a real valued measurable function on $\mathbb{R}^2.$ Next, let $\nu$ be a measure on $\mathbb{C}^*$ such that
\begin{itemize}
\item $\nu$ is $S^1$-invariant,
\item $\nu$ is given by a positive density which is locally bounded away from zero,
\item $ \ds{ \forall ~  n \in \mathbb{Z}, \delta(n) = \int_{\mathbb{C}^*} |w|^{2n} d\nu(w) < \infty.}$
\end{itemize}
Define $ \ds {\chi (w) = \sum _ {n \in \mathbb{Z}} \frac{c_n}{\sqrt{\delta(n)}} w^n } $ for $w \in \mathbb{C}^*$ and $c_n \in \mathbb{C}$ such that $|c_n|=1.$ Also define $\ds{ \phi(z,w) = \psi(z) \chi(w) }$ for $z \in \mathbb{C}^2, ~ w \in \mathbb{C}^*.$ It is easy to see that $\phi(z,w)$ is a holomorphic function on $\mathbb{C}^2 \times \mathbb{C}^*.$ We have the following Paley-Wiener type theorem.

\begin{thm}
The mapping $$\ds{C_\phi f(z,w) = \int_{M(2)} f(\xi, e^{i\alpha}) \phi((\xi,e^{i\alpha})^{-1}(z,w))d\xi d\alpha}$$ is an isometric isomorphism of $L^2(M(2))$ onto $$\ds{\mathcal{O}(\mathbb{C}^2 \times \mathbb{C}^*) \bigcap L^2(\mathbb{C}^2 \times \mathbb{C}^*, \mu(y)dxdyd\nu(w)).}$$
\end{thm}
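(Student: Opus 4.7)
The plan is to split the problem via the $SO(2)$-Fourier series on $M(2)$ and the Laurent series on $\mathbb{C}^*$, reducing the $M(2)$-version of the transform to the Euclidean and compact generalized Segal--Bargmann transforms (the analogues of Theorems~8 and~10 of \cite{H}).

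First I would expand $f(\xi,e^{i\alpha})=\sum_m f_m(\xi)e^{im\alpha}$ and $\chi(w)=\sum_n (c_n/\sqrt{\delta(n)})w^n$, and unwind the group inverse to get $(\xi,e^{i\alpha})^{-1}(z,w)=(e^{-i\alpha}(z-\xi),\,e^{-i\alpha}w)$. Substituting into the integral defining $C_\phi f$, inserting the defining integral representation of $\psi$, and performing the rotation change of variable $y\mapsto R_{-\alpha}y$ in $\mathbb{R}^2$ (legitimate because $\sigma$ is radial), one arrives at a formula of the shape
\[
C_\phi f(z,w)=2\pi\sum_{m,n}\frac{c_n\,w^n}{\sqrt{\delta(n)}}\int_{\mathbb{R}^2}\frac{A_{m-n}(y)\,\widetilde{f_m}(-y)}{\sqrt{\sigma(y)}}\,e^{-iy\cdot z}\,dy,
\]
where $A_k(y)=\frac{1}{2\pi}\int_{-\pi}^{\pi}e^{ik\alpha}e^{ia(R_{-\alpha}y)}\,d\alpha$ and $\widetilde{f_m}$ is the Euclidean Fourier transform of $f_m$.

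To establish the isometry, I would compute $\|C_\phi f\|^2$ in three stages: (i) the $\arg w$-integration against $d\nu$ is orthogonality in $n$ and produces the factor $\delta(n)$, cancelling $1/\sqrt{\delta(n)}$ and decoupling the index $n$; (ii) Plancherel in $x=\mathrm{Re}\,z$ followed by integration of $e^{2y\cdot\mathrm{Im}\,z}$ against $\mu(\mathrm{Im}\,z)\,d(\mathrm{Im}\,z)$ reproduces $\sigma(y)$ by the defining hypothesis on $\sigma$, cancelling $1/\sqrt{\sigma(y)}$; (iii) the remaining sum over $n$ collapses by the pointwise-in-$y$ unitarity of the infinite Toeplitz matrix $T(y)_{n,m}=A_{m-n}(y)$ on $\ell^2(\mathbb{Z})$. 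This Toeplitz unitarity is immediate from $|e^{ia(R_{-\alpha}y)}|=1$ together with Parseval on $S^1$, which say that $T(y)$ is the convolution operator with unit-modulus symbol. What survives, after a final Plancherel on $\mathbb{R}^2$ and Parseval in the $SO(2)$-coefficients, is exactly $\|f\|^2_{L^2(M(2))}$ up to bookkeeping constants.

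For surjectivity, I would argue that any $F\in\mathcal{O}(\mathbb{C}^2\times\mathbb{C}^*)\cap L^2(\mu(y)\,dxdy\,d\nu(w))$ admits a Laurent expansion $F(z,w)=\sum_n F_n(z)w^n$ with each $F_n$ entire on $\mathbb{C}^2$ and lying in a weighted Bergman space; Theorem~10 of \cite{H} applied to $\mathbb{R}^2$ then represents each $F_n$ as the image under the Euclidean generalized Segal--Bargmann transform of some $g_n\in L^2(\mathbb{R}^2)$, and inverting the unitary Toeplitz operator $T(y)$ converts the collection $(g_n)$ into the $SO(2)$-Fourier coefficients $(f_m)$ of the required preimage $f$. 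The main obstacle is exactly this index-coupling: because the phase $a$ is arbitrary, the transform is \emph{not} a direct sum of the Euclidean and compact generalized Segal--Bargmann transforms, and the decoupling is delivered only by the pointwise unitarity of $T(y)$. Once that one fact is in hand, both the isometry and the surjectivity follow by linear algebra and the two scalar generalized Segal--Bargmann theorems from \cite{H}.
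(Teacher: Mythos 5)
Your overall strategy --- expand in $SO(2)$-Fourier modes and in Laurent modes in $w$, then reduce to the Euclidean and compact generalized Segal--Bargmann theorems (Theorems 8 and 10 of \cite{H}) --- is the same as the paper's, but you execute it differently at two points, and one of your differences is substantive. The paper asserts that $\phi(x,e^{i\alpha})$ is radial in the $x$-variable, which is only true if the phase $a$ is radial (a hypothesis not actually imposed), and thereby obtains the fully decoupled formula $f*\phi(z,w)=\sum_m f_m*\psi(z)\,c_m\,\delta(m)^{-1/2}w^m$, after which the isometry is a termwise application of the two scalar theorems. You keep $a$ arbitrary and absorb the resulting coupling of the indices into the bi-infinite Toeplitz operator $T(y)$ whose symbol is the unimodular function $\alpha\mapsto e^{ia(R_{-\alpha}y)}$; its pointwise unitarity on $\ell^2(\mathbb{Z})$ is exactly right (it is conjugate, via Fourier series, to multiplication by a unit-modulus function on $S^1$), and it delivers both the isometry and the decoupling. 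So your argument proves the theorem as literally stated, for arbitrary measurable $a$, whereas the paper's proof as written covers only the radial case. Your surjectivity argument also differs: you build the preimage constructively, applying the surjectivity half of the Euclidean theorem to each Laurent coefficient $F_n$ and then inverting $T(y)$ measurably in $y$, while the paper runs a density/orthocomplement argument, testing $F$ against the images of $g(x)e^{im\alpha}$ and peeling off the two factors with Theorems 10 and 8 of \cite{H}. Both routes work; yours is more explicit, the paper's avoids having to reassemble the $g_n$ into a single $f$.

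Two items are still missing before your outline is a proof. First, membership of $C_\phi f$ in $\mathcal{O}(\mathbb{C}^2\times\mathbb{C}^*)$ is part of the conclusion, and you never verify that your double series converges locally uniformly; the paper does this via the bound $|f_m*\psi(z)|\leq C\|f_m\|_2$ for $z$ in a compact set, Cauchy--Schwarz, and the convergence of $\sum_m R^{2m}/\delta(m)$. The analogous estimate holds in your setting because the $n$-th Laurent coefficient is controlled by the $L^2$-norm of $(T(y)(\widetilde{f_\cdot}(-y)))_n$, and these norms are square-summable in $n$ with total $c\|f\|_2^2$ by the unitarity of $T(y)$; but this needs to be said. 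Second, a citation slip: the Euclidean surjectivity you invoke for the coefficients $F_n$ is Theorem 8 of \cite{H}, not Theorem 10, which is the compact-group statement used for the $w$-variable.
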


\begin{proof}
Let $f \in L^2(M(2))$ and \bea \label{eqn2} f(x,e^{i\alpha}) = \sum_{m \in \mathbb{Z}} f_m(x) e^{im\alpha} \eea where $\ds{f_m(x) = \frac{1}{2 \pi} \int_{- \pi}^{\pi} f(x,e^{i \alpha}) e^{-im\alpha}} d\alpha.$ Since the function $\phi(x, e^{i\alpha}),$ for $(x, e^{i\alpha}) \in M(2)$ is radial in the $\mathbb{R}^2$ variable $x,$ a simple computation shows that the Fourier series of $f * \phi(x, e^{i\alpha})$ is given by $$ f * \phi (x, e^{i\alpha}) = \sum_{m \in \mathbb{Z}} f_m * \psi (x) \frac{c_m}{\sqrt{\delta(m)}} e^{im\alpha}. $$

Now, notice that $f_m \in L^2(\mathbb{R}^2), ~ \forall ~  ~ m \in \mathbb{Z}$ and $f_m * \psi$ is a holomorphic function on $\mathbb{C}^2.$ Moreover, by Theorem 8 of \cite{H} we have \bea \label{eqn3} \int_{\mathbb{C}^2} |f_m * \psi(z)|^2 \mu(y) dx dy = \int_{\mathbb{R}^2} |f_m(x)|^2 dx. \eea

Naturally, the analytic continuation of $f * \phi(x, e^{i\alpha})$ to $\mathbb{C}^2 \times \mathbb{C}^*$ is given by \bea \label{eqn4} f * \phi (z,w) =  \sum_{m \in \mathbb{Z}} f_m * \psi (z) \frac{c_m}{\sqrt{\delta(m)}} w^m. \eea

We show that the series in (\ref{eqn4}) converges uniformly on compact sets in $\mathbb{C}^2 \times \mathbb{C}^*$ proving the holomorphicity. Let $K$ be a compact set in $\mathbb{C}^2 \times \mathbb{C}^*.$ For $(z,w) \in K$ we have \bea \label{eqn5} \left |\sum_{m \in \mathbb{Z}} f_m * \psi (z) \frac{c_m}{\sqrt{\delta(m)}} w^m \right| \leq \sum_{m \in \mathbb{Z}} |f_m * \psi (z)| \frac{|w|^m}{\sqrt{\delta(m)}} .\eea By Fourier inversion (see also Theorem 8 in \cite{H}) $$ f_m * \psi (z) = \int_{\mathbb{R}^2} \widetilde{f_m} (\xi) \frac{e^{ia(\xi)}}{\sqrt{\sigma(\xi)}} e^{-i\xi(x + iy)} d\xi $$ where $z = x + iy \in \mathbb{C}^2$ and $\widetilde{f_m}$ is the Fourier transform of $f_m.$ Hence, if $z$ varies in a compact subset of $\mathbb{C}^2,$ we have
\beas
|f_m * \psi(z)| &\leq& \|f_m\|_2 \left (\int_{\mathbb{R}^2} \frac{e^{2 \xi \cdot y}}{\sigma(\xi)} d\xi \right)^{\frac{1}{2}} \\
&\leq& C\|f_m\|_2.
\eeas

Using the above in (\ref{eqn5}) and assuming $|w| \leq R$ (as $w$ varies in a compact set in $\mathbb{C}^*$) we have $$ \left|\sum_{m \in \mathbb{Z}} f_m * \psi (z) \frac{c_m}{\sqrt{\delta(m)}} w^m \right| \leq C \sum_{m \in \mathbb{Z}} \|f_m\|_2 \frac{R^{m}}{\sqrt{\delta(m)}}.$$ Applying Cauchy-Schwarz inequality to the above, noting that $$\sum_{m \in \mathbb{Z}} \|f_m\|_2^2 = \|f\|_2^2 \txt{ and } \sum_{m \in \mathbb{Z}} \frac{R^{2m}}{\delta(m)} < \infty$$ we prove the above claim. Applying Theorem 10 in \cite{H} for $S^1$ we obtain $$ \int_{\mathbb{C}^*} |f * \phi(z,w)|^2 d\nu(w) = \sum_{m \in \mathbb{Z}} |f_m * \psi (z)|^2.$$

Integrating the above against $\mu(y)dx dy$ on $\mathbb{C}^2$ and using (\ref{eqn3}) we obtain that $$ \int_{\mathbb{C}^2} \int_{\mathbb{C}^*} |f * \phi(z,w)|^2 \mu(y) dx dy d\nu(w) = \|f\|_2^2.$$

To prove that the map $C_\phi$ is surjective it suffices to prove that the range of $C_\phi$ is dense in $\ds{\mathcal{O}(\mathbb{C}^2 \times \mathbb{C}^*) \bigcap L^2(\mathbb{C}^2 \times \mathbb{C}^*, \mu(y)dxdyd\nu(w)).}$ For this, consider functions of the form $f(x,e^{i\alpha}) = g(x) e^{im\alpha} \in L^2(M(2))$ where $g \in L^2(\mathbb{R}^2).$ Then a simple computation shows that $$ f * \phi(z,w) = g * \psi(z) w^m \txt{ for } (z,w) \in \mathbb{C}^2 \times \mathbb{C}^*.$$ Suppose $\ds{F \in \mathcal{O}(\mathbb{C}^2 \times \mathbb{C}^*) \bigcap L^2(\mathbb{C}^2 \times \mathbb{C}^*, \mu(y)dxdyd\nu(w))}$ be such that \bea \label{eqn6} \int_{\mathbb{C}^2 \times \mathbb{C}^*} F(z,w) \overline{g*\psi(z)} \overline{w}^m \mu(y)dxdyd\nu(w) = 0 \eea $\forall ~  g \in L^2(\mathbb{R}^2)$ and $\forall ~  m \in \mathbb{Z}.$ From (\ref{eqn6}) we have $$ \int_{\mathbb{C}^*} \left(\int_{\mathbb{C}^2} F(z,w) \overline{g*\psi(z)} d\mu(z) \right) \overline{w}^m d\nu(w) = 0, $$ which by Theorem 10 of \cite{H} implies that $$ \int_{\mathbb{C}^2} F(z,w) \overline{g*\psi(z)} d\mu(z) = 0 . $$ Finally, an application of Theorem 8 of \cite{H} shows that $F \equiv 0.$ Hence the proof.

\end{proof}

\section{Poisson integrals and Paley-Wiener type theorems}

In this section we study the Poisson integrals on $M(2).$ We also find conditions on the 'group Fourier transform' of a function so that it extends holomorphically to an appropriate domain in the complexification of the group. We start with the following Gutzmer-type lemma:

\begin{lem}\label{lemma}
Let $f \in L^2(M(2))$ extend holomorphically to the domain $$\{ (z,w) \in \mathbb{C}^2 \times \mathbb{C}^* : |Im z| < t, \frac{1}{R} < |w| < R \}$$  and $$\ds{\sup_{\left \{|y|<s , \frac{1}{N} < |w| < N \right\}} \int_{M(2)} |f(x+iy,|w|e^{i\theta})|^2 dx d\theta} < \infty $$ $\forall ~  s<t$ and $N<R.$ Then $$\ds{ \int_{M(2)} |f(x+iy,|w|e^{i\theta})|^2 dx d\theta = \sum_{n \in \mathbb{Z}} \left( \int_{\mathbb{R}^2} |\widetilde{f_n}(\xi)|^2 e ^{-2 \xi.y} d\xi \right) |w|^{2n} }$$ provided $|y|<t$ and $\ds{\frac{1}{R} < |w| < R}.$ Conversely, if  $$\ds{\sup_{\left\{|y|<s , \frac{1}{N} < |w| < N \right\}} \sum_{n \in \mathbb{Z}} \left( \int_{\mathbb{R}^2} |\widetilde{f_n}(\xi)|^2 e ^{-2 \xi.y} d\xi \right) |w|^{2n} < \infty} ~ \forall ~  s<t  \txt{ and } N<R$$ then $f$ extends holomorphically to the domain $$\ds{\left\{ (z,w) \in \mathbb{C}^2 \times \mathbb{C}^* : |Im z| < t, \frac{1}{R} < |w| < R \right\}}$$ and $$\ds{\sup_{\left\{|y|<s , \frac{1}{N} < |w| < N \right\}} \int_{M(2)} |f(x+iy,|w|e^{i\theta})|^2 dx d\theta} < \infty ~ \forall ~  s<t \txt{ and } N<R.$$

\end{lem}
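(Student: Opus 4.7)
The plan is to reduce the lemma to two classical identities---Parseval for Laurent expansions on the annulus $\{1/R<|w|<R\}$, and the Paley--Wiener/Plancherel identity on the strip $\{|\mathrm{Im}\,z|<t\}\subset\mathbb{C}^{2}$---linked by the Fourier expansion of $f$ in the $SO(2)$ variable.

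For the forward direction I first fix $z=x+iy$ with $|y|<t$ and look at $w\mapsto f(z,w)$, which is holomorphic on the annulus. Expanding it as a Laurent series $f(z,w)=\sum_{n\in\mathbb{Z}}a_{n}(z)w^{n}$, the coefficients $a_{n}(z)$ are holomorphic in $z$ (via the Cauchy integral formula in $w$); evaluating at $|w|=1$ and comparing with the $SO(2)$-Fourier series of $f$ on the real group shows $a_{n}(x)=f_{n}(x)$, so by the identity principle $a_{n}$ is the holomorphic extension of $f_{n}$ to the strip, which I continue to denote $f_{n}$. Parseval on $S^{1}$ at radius $|w|$ then gives
\[
\int_{0}^{2\pi}|f(z,|w|e^{i\theta})|^{2}\,d\theta \;=\; 2\pi\sum_{n\in\mathbb{Z}}|f_{n}(z)|^{2}|w|^{2n}.
\]
Integrating in $x$, Fubini together with the $L^{2}$-hypothesis yields $\sum_{n}\|f_{n}(\,\cdot+iy)\|_{L^{2}(\mathbb{R}^{2})}^{2}|w|^{2n}<\infty$, so each $f_{n}$ is individually a holomorphic function on the strip with enough $L^{2}$-control to apply the Paley--Wiener identity on $\mathbb{R}^{2}$: $\int_{\mathbb{R}^{2}}|f_{n}(x+iy)|^{2}\,dx=\int_{\mathbb{R}^{2}}|\widetilde{f_{n}}(\xi)|^{2}e^{-2\xi\cdot y}\,d\xi$. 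Summing against $|w|^{2n}$ delivers the Gutzmer formula.

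For the converse I would construct $f$ directly from the $\widetilde{f_{n}}$: define $f_{n}(z)=\int_{\mathbb{R}^{2}}\widetilde{f_{n}}(\xi)\,e^{i\xi\cdot z}\,d\xi$, which is holomorphic on $\{|\mathrm{Im}\,z|<t\}$ precisely because $\int|\widetilde{f_{n}}(\xi)|^{2}e^{-2\xi\cdot y}\,d\xi<\infty$ for every $|y|<t$, and then set $f(z,w)=\sum_{n\in\mathbb{Z}}f_{n}(z)w^{n}$. The finiteness hypothesis on the double sum combined with Cauchy--Schwarz forces uniform convergence of this Laurent series on compact subsets of the prescribed domain, hence holomorphicity of the sum; running the forward computation backwards yields the claimed $L^{2}$-bound on horizontal slices.

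The main obstacle I anticipate is the identification of the Laurent coefficients of the holomorphic extension of $f$ with the holomorphic extensions of the individual Fourier coefficients $f_{n}$, together with the interchange of the Laurent sum, the Euclidean Fourier integral and the spatial integration---essentially the step that passes from $L^{2}$ control of $f$ on $M(2)$ to $L^{2}$ control of each $f_{n}$ on the strip. Once this passage is secured, the two classical theorems plug together almost mechanically to give both directions.
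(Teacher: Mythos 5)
Your proposal is correct and follows essentially the same route as the paper: expand $f$ in the $SO(2)$ variable, identify the Laurent coefficients of $w\mapsto f(z,w)$ with the holomorphic extensions of the Fourier coefficients $f_n$, and then combine Parseval on $S^1$ with the Paley--Wiener/Plancherel identity for the strip in $\mathbb{C}^2$; the converse is handled in the same reassembly spirit that the paper dismisses with ``can be proved similarly.'' The only (immaterial) difference is one of ordering --- the paper gets holomorphy and the uniform $L^2$ bounds for each $f_n$ directly from the integral formula $f_n(z)=\frac{1}{2\pi}\int_{-\pi}^{\pi}f(z,e^{i\alpha})e^{-in\alpha}\,d\alpha$ before introducing the Laurent expansion, whereas you extract this control afterwards via Parseval and Fubini.
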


\begin{proof}

Notice that $\ds{f_n(x) = \frac{1}{2\pi} \int_{-\pi}^{\pi} f(x,e^{i\alpha}) e^{-in\alpha} d\alpha.}$ It follows that $f_n(x)$ has a holomorphic extension to $\{ z \in \mathbb{C}^2 : |Im z| < t \}$ and $$\ds{ \sup_{|y|<s} \int_{\mathbb{R}^2} |f_n(x+iy)|^2 dx < \infty ~ \forall ~  s<t. }$$ Consequently, $$ \int_{\mathbb{R}^2} |f_n(x+iy)|^2 dx = \int_{\mathbb{R}^2} |\widetilde{f_n}(\xi)|^2 e ^{-2 \xi.y} d\xi \txt{ for } |y|<s ~ \forall ~  s<t. $$ Now, for each fixed $z \in \mathbb{C}^2$ with $|Im z|<s$ the function $w \ra f(z,w)$ is holomorphic in the annulus $\{ w \in \mathbb{C}^* : \frac{1}{N} < |w| < N \}$ for every $s<t$ and $N<R$ and so admits a Laurent series expansion $$ f(z,w) = \sum _{m \in \mathbb{Z}} a_m(z) w^m. $$ It follows that $a_m(z) = f_m(z) ~ \forall ~  m \in \mathbb{Z}.$ First part of the lemma is proved now by appealing to the Plancherel theorem on $S^1$ and $\mathbb{R}^2.$ Converse can also be proved similarly.

\end{proof}

Recall from the introduction that the Laplacian $\Delta$ on $M(2)$ is given by $\ds{\Delta = -\Delta_{\mathbb{R}^2}}$ $\ds{- \frac{\partial^2}{\partial \alpha^2}.}$ If $f \in L^2(M(2))$ it is easy to see that $$ e^{-t\Delta^{\frac{1}{2}}} f (x,e^{i \alpha}) = \sum_{m \in \mathbb{Z}} \left( \int_{\mathbb{R}^2} \widetilde{f_m}(\xi) e^{-t(|\xi|^2 + m^2)^{\frac{1}{2}}} e^{ix.\xi} d\xi \right) e^{im\alpha}.$$ We have the following (almost) characterization of the Poisson integrals. Let $\Omega_s$ denote the domain in $\mathbb{C}^2 \times \mathbb{C}^*$ defined by $$\ds{ \Omega_s = \{ (z,w) : |Im z| < s, e^{-s} < |w| < e^s \}. }$$

\begin{thm}\label{theorem}

Let $f \in L^2(M(2)).$ Then $g = e^{-t\Delta^{\frac{1}{2}}} f$ extends to a holomorphic function on the domain $\Omega_{\frac{t}{\sqrt{2}}}$ and $$ \sup_{\left\{|y|<\frac{t}{\sqrt{2}} , e^{-\frac{t}{\sqrt{2}}} < |w| < e^{\frac{t}{\sqrt{2}}}\right\}} \int_{M(2)} |g(x+iy,|w|e^{i\alpha})|^2 dx d\alpha < \infty .$$ Conversely, let $g$ be a holomorphic function on $\Omega_t$ and $$ \ds{\sup_{\left\{|y|<s , e^{-s} < |w| < e^s \right\}} \int_{M(2)} |g(x+iy,|w|e^{i\alpha})|^2 dx d\alpha < \infty \txt{ for } s<t.}$$ Then $\forall ~  s<t, ~ \exists f \in L^2(M(2))$ such that $e^{-s\Delta^{\frac{1}{2}}} f = g.$

\end{thm}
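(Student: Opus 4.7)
The plan is to transfer both halves of the theorem via Lemma \ref{lemma} to the Fourier side. Expand $g(x,e^{i\alpha}) = \sum_m g_m(x) e^{im\alpha}$; the identity $g = e^{-t\Delta^{1/2}}f$ translates to the relation $\widetilde{g_m}(\xi) = e^{-t\sqrt{|\xi|^2+m^2}}\,\widetilde{f_m}(\xi)$ between the Euclidean Fourier transforms of the Fourier coefficients.

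For the forward direction I would verify the Fourier-side hypothesis of the converse half of Lemma \ref{lemma} for $g$. Writing $|w| = e^a$, the summand in the Gutzmer expression becomes $|\widetilde{f_n}(\xi)|^2 e^{-2t\sqrt{|\xi|^2+n^2}} e^{-2\xi\cdot y + 2an}$. By Cauchy-Schwarz applied in $\mathbb{R}^3$ to the pair $(\xi,-n)$ and $(y,-a)$ one has $|{-}\xi\cdot y + an| \leq \sqrt{|y|^2+a^2}\sqrt{|\xi|^2+n^2}$. When $|y| < t/\sqrt{2}$ and $|a| < t/\sqrt{2}$ we have $|y|^2 + a^2 < t^2$, so the Poisson factor $e^{-2t\sqrt{|\xi|^2+n^2}}$ absorbs $e^{-2\xi\cdot y + 2an}$ and the Gutzmer sum is bounded by $\sum_n \int_{\mathbb{R}^2}|\widetilde{f_n}|^2 d\xi = \|f\|_2^2$. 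Lemma \ref{lemma} then yields both the holomorphic extension to $\Omega_{t/\sqrt{2}}$ and the required $L^2$-bound.

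For the converse, fix $s < t$, choose $r$ with $s < r < t$, and let $\Phi(y,a) := \sum_n \bigl(\int_{\mathbb{R}^2}|\widetilde{g_n}(\xi)|^2 e^{-2\xi\cdot y}d\xi\bigr) e^{2an}$. By Lemma \ref{lemma}, $\Phi$ is bounded on $\{|y|<r\} \times \{|a|<r\}$, hence on the sphere $S_r = \{(y,a) \in \mathbb{R}^2 \times \mathbb{R} : |y|^2 + a^2 = r^2\}$. I would integrate $\Phi$ over $S_r$ against surface measure and swap the order of summation and integration using the standard identity
$$\int_{|u|=r,\; u \in \mathbb{R}^3} e^{v\cdot u}\, dS(u) = \frac{4\pi r \sinh(r|v|)}{|v|},$$
applied with $v = (-2\xi, 2n)$, to obtain
$$\sum_n \int_{\mathbb{R}^2} |\widetilde{g_n}(\xi)|^2\, \frac{\sinh(2r\sqrt{|\xi|^2+n^2})}{\sqrt{|\xi|^2+n^2}}\, d\xi < \infty.$$
Since $r > s$, the polynomial factor $\sqrt{|\xi|^2+n^2}$ is dominated by the exponential gap $e^{2(r-s)\sqrt{|\xi|^2+n^2}}$, which yields $\sum_n \int |\widetilde{g_n}|^2 e^{2s\sqrt{|\xi|^2+n^2}}\, d\xi < \infty$. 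Defining $f$ by $\widetilde{f_m}(\xi) = e^{s\sqrt{|\xi|^2+m^2}}\, \widetilde{g_m}(\xi)$ then produces an element of $L^2(M(2))$ with $e^{-s\Delta^{1/2}}f = g$.

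The main obstacle is the converse step: the Gutzmer data produces $e^{-2\xi\cdot y + 2an}$ only at fixed points $(y,a)$, whereas one needs the rotation-invariant exponential $e^{2s\sqrt{|\xi|^2+n^2}}$ to define $f$. The sphere-integration trick turns the former into the latter by exploiting the fact that $(\xi, n)$ and $(y, a)$ pair as vectors in $\mathbb{R}^3$, while the small buffer $r > s$ is exactly what is needed to absorb the $1/\sqrt{|\xi|^2+n^2}$ loss coming from the $\sinh$ asymptotics.
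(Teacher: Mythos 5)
Your argument is correct, and its forward half coincides with the paper's: both rest on the Cauchy--Schwarz bound $|{-}\xi\cdot y+an|\le\sqrt{|y|^2+a^2}\,\sqrt{|\xi|^2+n^2}<t\sqrt{|\xi|^2+n^2}$ for $|y|,|a|<t/\sqrt{2}$ (the paper phrases it as $\sup_{\xi,m}e^{-2t(|\xi|^2+m^2)^{1/2}}e^{2|\xi||y|}e^{2|m|s}\le C$), followed by the converse half of Lemma \ref{lemma}. In the converse direction you take a genuinely different, and somewhat cleaner, route. The paper averages the Gutzmer identity only over the two-dimensional sphere $|y|=s$, which produces the Bessel factor $J_0(2is|\xi|)\sim e^{2s|\xi|}$, extracts $e^{2|n|s}$ separately from the supremum over the annulus $e^{-s}<|w|<e^s$, and then passes from $e^{2s(|\xi|+|n|)}$ to $e^{2s(|\xi|^2+n^2)^{1/2}}$ via $|\xi|+|n|\ge\sqrt{|\xi|^2+n^2}$. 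You instead set $a=\ln|w|$ and average over the sphere $|y|^2+a^2=r^2$ in $\mathbb{R}^3$, so that the identity $\int_{|u|=r}e^{v\cdot u}\,dS(u)=4\pi r\sinh(r|v|)/|v|$ delivers the rotation-invariant weight $\sinh(2r\sqrt{|\xi|^2+n^2})/\sqrt{|\xi|^2+n^2}$ in one stroke; the buffer $s<r<t$ then absorbs the algebraic denominator exactly as the paper's buffer absorbs the Bessel asymptotics. What your version buys is symmetry: $(\xi,n)$ and $(y,a)$ are paired as $\mathbb{R}^3$-vectors throughout, matching the geometry already used in the forward half, and the separate handling of the $|w|^{2n}$ factor disappears. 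What the paper's version buys is that it stays within the two-dimensional Fourier analysis used elsewhere in the section. One small point to make explicit in your write-up: the sphere $S_r$ is not contained in the open region $\{|y|<r\}\times\{|a|<r\}$ (it meets its boundary), so you should choose $r<r'<t$ and invoke the boundedness of $\Phi$ on the region at level $r'$ to justify the finiteness of the surface integral; with that adjustment the argument is complete.
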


\begin{proof}

We know that, if $f \in L^2(M(2))$ then $$ g (x,e^{i \alpha}) = e^{-t\Delta^{\frac{1}{2}}} f (x,e^{i \alpha}) = \sum_{m \in \mathbb{Z}} \left( \int_{\mathbb{R}^2} \widetilde{f_m}(\xi) e^{-t(|\xi|^2 + m^2)^{\frac{1}{2}}} e^{ix.\xi} d\xi \right) e^{im\alpha} .$$ Also, $\ds{g(x,e^{i\alpha}) = \sum_{m \in \mathbb{Z}}g_m(x) e^{im\alpha} }$ with $\ds{ \widetilde{g_m}(\xi) = \widetilde{f_m}(\xi) e^{-t(|\xi|^2 + m^2)^{\frac{1}{2}}}}.$ If $s \leq \frac{t}{\sqrt{2}}$ it is easy to show that $$ \sup_{\left\{\xi \in \mathbb{R}^2, m \in \mathbb{Z}\right\}} e^{-2t(|\xi|^2 + m^2)^{\frac{1}{2}}} e^{2|\xi| |y|} e^{2|m|s} \leq C < \infty \txt{ for } |y| \leq \frac{t}{\sqrt{2}}. $$ It follows that $$ \sup_{\left\{|y|<\frac{t}{\sqrt{2}} , e^{-\frac{t}{\sqrt{2}}} < |w| < e^{\frac{t}{\sqrt{2}}}\right\}} \sum_{m \in \mathbb{Z}} \left( \int_{\mathbb{R}^2} |\widetilde{g_m}(\xi)|^2 e ^{-2 \xi.y} d\xi \right) |w|^{2m} < \infty. $$ By the previous lemma we prove the first part of the theorem.

Conversely, let $g$ be a holomorphic function on $\Omega_t$ and $$\ds{\sup_{\{|y|<s , e^{-s} < |w| < e^s\}} \int_{M(2)} |g(x+iy,|w|e^{i\alpha})|^2 dx d\alpha < \infty \txt{ for } s<t.}$$ By Lemma \ref{lemma} we have $$\ds{\sup_{\{|y|<s , e^{-s} < |w| < e^s\}} \sum_{n \in \mathbb{Z}} \left( \int_{\mathbb{R}^2} |\widetilde{g_n}(\xi)|^2 e ^{-2 \xi.y} d\xi \right) |w|^{2n} < \infty \txt{ for } s<t.}$$ Integrating the above over $|y|=s<t,$ we obtain $$ \sum_{n \in \mathbb{Z}} \left( \int_{\mathbb{R}^2} |\widetilde{g_n}(\xi)|^2 J_0(i2s|\xi|) d\xi \right) |w|^{2n} < \infty,$$ where $J_0$ is the Bessel function of first kind. Noting that $J_0(i2s|\xi|) \sim e^{2s|\xi|}$ for large $|\xi|$ we obtain $$ \sum_{n \in \mathbb{Z}} \int_{\mathbb{R}^2} |\widetilde{g_n}(\xi)|^2 e^{2s|\xi|} e^{2|n|s} d\xi < \infty \txt{ for } s<t. $$ This surely implies that $$ \sum_{n \in \mathbb{Z}} \int_{\mathbb{R}^2} |\widetilde{g_n}(\xi)|^2 e^{2s(|\xi|^2 + m^2)^{\frac{1}{2}}} d\xi < \infty \txt{ for } s<t. $$ Defining $\widetilde{f_m}(\xi)$ by $\widetilde{f_m}(\xi) = \widetilde{g_m}(\xi) e^{s(|\xi|^2 + m^2)^{\frac{1}{2}}}$ we obtain $$f(x,e^{i\alpha}) = \sum_{m \in \mathbb{Z}} f_m(x) e^{im\alpha} \in L^2(M(2))$$ and $g = e^{-s\Delta^{\frac{1}{2}}} f.$

\end{proof}

\begin{rem}
A similar result may be proved for the operator $\ds{e^{-t\Delta^{\frac{1}{2}}_{\mathbb{R}^2}} e^{-t\Delta^{\frac{1}{2}}_{S^1}}.}$
\end{rem}

\subsection*{Analytic vectors}

Let $\pi$ be a unitary representation of a Lie group $G$ on a Hilbert space $H.$ A vector $v \in H$ is called an analytic vector for $\pi$ if the function $g \rightarrow \pi(g)v$ is analytic.

Recall the representations $U_g^{\xi}$ from the introduction. Denote by $U_g^a$ the representations $U_g^{(a,0)}$ for $a>0.$ If $e_n(\theta) = e^{in\theta} \in L^2(S^1),$ it is easy to see that $e_n$'s are analytic vectors for these representations. For $g = (x,e^{i\alpha}) \in M(2),$ we have $$ \left( U_g^a e_n \right) (\theta) = e^{i\langle x,ae^{i\theta} \rangle} e^{in(\theta - \alpha)}.$$

This action of $U_g^a$ on $e_n$ can clearly be analytically continued to $\mathbb{C}^2 \times \mathbb{C}^*$ and we obtain $$ \left( U_{(z,w)}^a e_n \right) (\theta) = e^{i\langle x,ae^{i\theta} \rangle} e^{-\langle y,ae^{i\theta} \rangle} w^{-n} e^{in\theta} $$ where $(z,w) \in \mathbb{C}^2 \times \mathbb{C}^*$ and $z=x+iy \in \mathbb{C}^2.$

We also note that the action of $S^1$ on $\mathbb{R}^2$ naturally extends to an action of $\mathbb{C}^*$ on $\mathbb{C}^2$ given by $$ w(z_1,z_2) = (z_1 \cos \zeta - z_2 \sin \zeta, z_1 \sin \zeta + z_2 \cos \zeta)$$ where $w=e^{i\zeta} \in \mathbb{C}^*$ and $(z_1,z_2) \in \mathbb{C}^2.$

Our next theorem is in the same spirit of Theorem 3.1 from Goodman \cite{G2}.

\begin{thm}
Let $f \in L^2(M(2)).$ Then $f$ extends holomorphically to $\mathbb{C}^2 \times \mathbb{C}^*$ with $$\int_{|y| = r} \int_{M(2)} |f(w^{-1}(x+iy),|w|e^{i\alpha})|^2 dx d\alpha d\sigma_r(y) < \infty$$ (where $\sigma_r$ is the normalized surface area measure on the sphere $\{ |y| = r \} \subseteq \mathbb{R}^2$) iff $$\ds{ \int_0^\infty \int_{|y| = r} \|U_{(z,w)}^a \hat{f}(a) \|_{HS}^2 d\sigma_r(y) ada} < \infty $$ where $z=x+iy \in \mathbb{C}^2$ and $w \in \mathbb{C}^*.$ In this case we also have \beas && \int_0^\infty \int_{|y| = r} \|U_{(z,w)}^a \hat{f}(a) \|_{HS}^2 d\sigma_r(y) ada \\ &=& \int_{|y| = r} \int_{M(2)} |f(w^{-1}(x+iy),|w|e^{i\alpha})|^2 dx d\alpha d\sigma_r(y).\eeas
\end{thm}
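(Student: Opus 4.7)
The plan is to reduce the identity to a family of Euclidean Gutzmer-type computations indexed by the $SO(2)$-Fourier decomposition of $f$, in the same spirit as Lemma \ref{lemma}. Expand $f(x,e^{i\alpha}) = \sum_m f_m(x)e^{im\alpha}$; the hypothesized holomorphic extension gives a Laurent series $f(z,w) = \sum_m f_m(z)\,w^m$ on $\mathbb{C}^2 \times \mathbb{C}^*$, with each $f_m$ entire on $\mathbb{C}^2$. Plancherel on $S^1$ applied to the $\alpha$-integral converts the left-hand side of the asserted identity into
\[
2\pi \sum_m |w|^{2m} \int_{|y|=r} \int_{\mathbb{R}^2} |f_m(w^{-1}(x+iy))|^2\,dx\,d\sigma_r(y),
\]
which is the target to match.

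For the right-hand side, I would first compute $(\hat f(a)\,e_n)(\theta) = 2\pi\, e^{in\theta}\,\widetilde{f_n}(-ae^{i\theta})$ on the basis $e_n(\theta)=e^{in\theta}$ of $L^2(S^1)$ by inserting the definitions of $\hat f(a)$ and $U_g^a$ and integrating out $\beta$. Using the $S^1$-Fourier expansion $\widetilde{f_n}(-ae^{i\theta}) = \sum_p b_{n,p}(a)e^{ip\theta}$, the analytically continued action of $(z,w) \in \mathbb{C}^2 \times \mathbb{C}^*$ produces
\[
(U_{(z,w)}^a \hat f(a)\,e_n)(\theta) = 2\pi\, e^{i\langle z, ae^{i\theta}\rangle}\, w^{-n}\, e^{in\theta}\, \phi_n(\theta,w),
\]
with $\phi_n(\theta,w) = \sum_p b_{n,p}(a) w^{-p} e^{ip\theta}$. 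Since $|e^{i\langle x, ae^{i\theta}\rangle}|=1$ for real $x$, the Hilbert--Schmidt norm is independent of $x$ and equals
\[
\|U_{(z,w)}^a \hat f(a)\|_{HS}^2 = 2\pi \int_0^{2\pi} e^{-2\langle y, ae^{i\theta}\rangle} \sum_n |w|^{-2n} |\phi_n(\theta,w)|^2\,d\theta,
\]
consistent with the absence of $x$ on the right-hand side of the theorem.

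Next integrate in $|y|=r$: rotation-invariance reduces the spherical average to a Bessel function (cf.\ the use of $J_0(2ira)$ in Theorem \ref{theorem}) independent of $\theta$, and Parseval on $S^1$ turns $\int_0^{2\pi}|\phi_n(\theta,w)|^2\,d\theta$ into $2\pi \sum_p |b_{n,p}(a)|^2 |w|^{-2p}$. The right-hand side thus becomes a double series in $(n,p)$ with coefficients $|b_{n,p}(a)|^2$ weighted by $|w|^{-2(n+p)}$ and integrated against $J_0(2ira)\,a\,da$. On the left, apply Gutzmer's formula on $\mathbb{R}^2$ to each entire $f_m$, then pass to polar coordinates to extract the angular Fourier series of $\widetilde{f_m}$; the $w^{-1}$-action on the first argument of $f_m$ multiplies those angular components by powers of $w$, and a reindexing identifying the Laurent index $m$ with $n+p$ (the shift forced by how complex rotation acts on $\mathbb{C}^2$) produces the same double series. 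This matching is the technical heart of the forward direction.

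For the converse, start from finiteness of the right-hand side; the asymptotic $J_0(2ira)\sim e^{2ra}$ for large $a$ (as used in Theorem \ref{theorem}) forces convergence of $\sum_{n,p}|w|^{-2(n+p)}\int_0^\infty |b_{n,p}(a)|^2 e^{2ra}\,a\,da$ on the relevant set. Feeding this back through the converse half of Lemma \ref{lemma} (equivalently, Euclidean Gutzmer for each $f_m$) gives the entire extension of $f_m$ with the required $L^2$-bound on the tube, and assembling the Laurent series recovers the holomorphic extension of $f$ on $\mathbb{C}^2 \times \mathbb{C}^*$ with the asserted integrability. The main obstacle throughout is the combinatorial reindexing in the forward direction: identifying the Laurent index $m$ of $f(z,w) = \sum_m f_m(z)w^m$ with the pair $(n,p)$ coming from the angular Fourier decomposition of $\widetilde{f_n}(-ae^{i\theta})$, since the complex-rotation action of $w^{-1}$ on $\mathbb{C}^2$ mixes the radial and angular Fourier components of $f_m$ nontrivially.
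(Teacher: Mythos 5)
Your outline correctly identifies the two computational ingredients (the Laurent/Fourier expansion in the compact variable, the spherical average producing $J_0(2ira)$, and the $J_0(2ira)\sim e^{2ra}$ asymptotics for the converse), but the forward direction is not actually proved: the step you yourself call ``the technical heart'' and then, in your last sentence, ``the main obstacle'' --- matching the two double series --- is precisely where the content of the theorem lies, and it is asserted rather than carried out. Concretely, two things are missing. First, on the left-hand side you cannot ``apply Gutzmer's formula on $\mathbb{R}^2$ to each entire $f_m$'': the integrand is $|f_m(w^{-1}(x+iy))|^2$ with $w^{-1}$ a \emph{complex} rotation, and after decomposing $f_m$ into angular modes $f_m^{(p)}$ (with $f_m^{(p)}(e^{i\theta}x)=e^{ip\theta}f_m^{(p)}(x)$, so that $f_m^{(p)}(w^{-1}z)=w^{-p}f_m^{(p)}(z)$) the cross terms between different $p$ do \emph{not} vanish for a fixed $y$; they vanish only after the average over $|y|=r$ (or at $y=0$). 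The same issue arises on the right-hand side between different Fourier modes of $\widetilde{f_n}(ae^{i\theta})$. These orthogonality-after-spherical-averaging statements are exactly the identities the paper isolates as (3.4) and (3.6), and without them the ``reindexing'' has no diagonal series to reindex. Second, your proposed identification of the Laurent index $m$ with $n+p$ is confused: the proof of Lemma \ref{lemma} shows the Laurent coefficients in the second variable \emph{are} the $\alpha$-Fourier coefficients, so $m$ corresponds to $n$, while $p$ is an independent index (the angular mode in the $\mathbb{R}^2$-variable) contributing its own power of $w$ through the action on the first argument; the exponent $n+p$ appears inside $U^a_{(z,w)}\hat f(a)e_n$, but it is not the Laurent index of $f(z,\cdot)$.

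The paper's route around this is structurally different from what you propose and is worth internalizing. It does \emph{not} work only with the $\alpha$-expansion $f=\sum_m f_m e^{im\alpha}$; it introduces a second decomposition $f=\sum_m f^m$ with $f^m(x,e^{i\alpha})=\int_{S^1}f(e^{i\theta}x,e^{i\alpha})e^{-im\theta}\,d\theta$, i.e.\ into components homogeneous under the rotation action on the $\mathbb{R}^2$-variable. For such a component the Hecke--Bochner identity forces $\widetilde{(f^m)_n}(ae^{i\theta})$ to be a single angular mode, so both sides collapse to single sums computable directly from Lemma \ref{lemma} and the transformation law $f^m(w^{-1}z,\cdot)=w^{-m}f^m(z,\cdot)$; the general case then follows by summing over $m$, using the two orthogonality relations that hold after integrating over $|y|=r$. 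If you want to salvage your direct double-series approach, you must at minimum state and prove those orthogonality relations and fix the index bookkeeping; as written, the proposal is a plan for a computation, not a proof.
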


\begin{proof}

First assume that $f \in L^2(M(2))$ satisfies the transformation property \bea \label{e1} f(e^{i\theta}x,e^{i\alpha}) = e^{im\theta} f(x,e^{i\alpha}) \eea for some fixed $m \in \mathbb{Z}$ and $\forall ~ (x,e^{i\alpha}) \in M(2).$ As earlier we have $$ \left( \hat{f}(a) e_n \right) (\theta) = \widetilde{f_n}(ae^{i\theta}) e^{in\theta}.$$ By the Hecke-Bochner identity, we have $$ \widetilde{f_n}(ae^{i\theta}) = i^{-|m|} a^{|m|} (\mathcal{F}_{2+2|m|} g)(a) e^{im\theta}$$ where $\mathcal{F}_{2+2|m|} (g)$ is the $2+2|m|$-dimensional Fourier transform of $g(x) =$ $\ds{\frac{f_n(|x|)}{|x|^{|m|}}},$ considered as a radial function on $\mathbb{R}^{2+2|m|}.$

Hence, $$ \left( \hat{f}(a) e_n \right) (\theta) = i^{-|m|} a^{|m|}(\mathcal{F}_{2+2|m|} g)(a) e^{i(m+n)\theta}.$$ It follows that $\hat{f}(a) e_n$ is an analytic vector and we can apply $U_{(z,w)}^a$ to the above. We obtain \beas && \left( U_{(z,w)}^a \hat{f}(a) e_n \right) (\theta) \\ &=& e^{i\langle x,ae^{i\theta} \rangle} e^{-\langle y,ae^{i\theta} \rangle} i^{-|m|} a^{|m|} (\mathcal{F}_{2+2|m|} g)(a) w^{-(m+n)} e^{i(m+n)\theta}.\eeas Thus, \beas && \int_{S^1} \left| \left[ U_{(z,w)}^a \hat{f}(a) e_n \right] (\theta)\right|^2 d\theta\\ &=& |w|^{-2(m+n)} \int_{S^1} a^{2m} |(\mathcal{F}_{2+2m} g)(a)|^2 e^{-2\langle y,ae^{i\theta} \rangle} d\theta \\ &=& |w|^{-2(m+n)} \int_{S^1} e^{-2\langle y,ae^{i\theta} \rangle} |\widetilde{f_n}(ae^{i\theta})|^2 d\theta. \eeas Hence, \bea \label{e2} \left. \begin{array}{rcll} && \ds{\int_0^\infty \| U_{(z,w)}^a \hat{f}(a) \|_{HS}^2 ada} \\ &=& \ds{|w|^{-2m} \sum_{n \in \mathbb{Z}} |w|^{-2n} \left( \int_{\mathbb{R}^2} e^{-2\langle y,\xi \rangle} |\widetilde{f_n}(\xi)|^2 d\xi \right)}. \end{array}\right. \eea Notice that, if $f$ extends holomorphically to $\mathbb{C}^2 \times \mathbb{C}^*$ we must have $f(w^{-1}z,w)=w^{-m}f(z,w) ~ \forall ~ (z,w) \in \mathbb{C}^2 \times \mathbb{C}^*$ because of (\ref{e1}).

In view of Lemma \ref{lemma}, the above remark and the identity (\ref{e2}), the theorem is established for functions with transformation property (\ref{e1}) and we obtain \bea \label{e3} \left. \begin{array}{rcll} && \ds{ \int_0^\infty \int_{|y| = r} \|U_{(z,w)}^a \hat{f}(a) \|_{HS}^2 d\sigma_r(y) ada} \\ &=& \ds{ \int_{|y| = r} \int_{M(2)} |f(w^{-1}(x+iy),|w|e^{i\alpha})|^2 dx d\alpha d\sigma_r(y).} \end{array}\right. \eea

Next, we deal with the general case. For $f \in L^2(M(2))$ define $$ f^m(x,e^{i\alpha}) = \int_{S^1} f(e^{i\theta}x,e^{i\alpha}) e^{-im\theta} d\theta.$$ Then $f^m(e^{i\theta}x,e^{i\alpha}) = e^{im\theta} f^m(x,e^{i\alpha})$ and $f_m$'s are orthogonal on $M(2).$ Assume that $f$ extends holomorphically to $\mathbb{C}^2 \times \mathbb{C}^*.$ Then, so does $f^m ~ \forall ~ m \in \mathbb{Z}$ and we have \bea \label{e4} \left. \begin{array}{rcll} && \ds{\int_{|y| = r} \int_{M(2)} |f(w^{-1}(x+iy),|w|e^{i\alpha})|^2 dx d\alpha d\sigma_r(y)} \\ &=& \ds{\sum_{m \in \mathbb{Z}} \int_{|y| = r} \int_{M(2)} |f^m(w^{-1}(x+iy),|w|e^{i\alpha})|^2 dx d\alpha d\sigma_r(y).} \end{array}\right. \eea This follows from the fact that $$ \int_{|y|=r} \int_{\mathbb{R}^2} f^m(w^{-1}(x+iy),w) \overline{f^l(w^{-1}(x+iy),w)} dx d\sigma_r(y) = 0 \txt{ if } m \neq l.$$ Applying identity (\ref{e3}) we get from (\ref{e4}) \bea \label{e5} \sum_{m \in \mathbb{Z}} \int_0^\infty \int_{|y|=r} \| U_{(z,w)}^a \widehat{f^m}(a) \|_{HS}^2 ada d\sigma_r(y) < \infty. \eea Now, let $\ds{\langle T,S \rangle_{HS} = \sum_{n \in \mathbb{Z}} \langle Te_n, Se_n \rangle}$ denote the inner product in the space of Hilbert-Schmidt operators on $L^2(S^1).$ Then we notice that \bea \label{e6} \left. \begin{array}{rcll} && \ds{\int_{|y|=r} \left \langle U_{(z,w)}^a \widehat{f^m}(a) , U_{(z,w)}^a \widehat{f^l}(a) \right \rangle_{HS} d\sigma_r(y)} \\ &=& \ds{ \delta_{ml} \sum_{n \in \mathbb{Z}} (-1)^{-l} \left( \frac{a}{i}\right)^{m+l} (\mathcal{F}_{2+2|m|} g^m)(a)}\\ && \ds{\overline{(\mathcal{F}_{2+2|l|} g^l)(a)} w^{-(m+n)} (\overline{w})^{-(l+n)} J_0(2ira).} \end{array}\right. \eea Hence (\ref{e5}) implies that $$ \int_0^\infty \int_{|y|=r} \| U_{(z,w)}^a \widehat{f}(a) \|_{HS}^2 ada d\sigma_r(y) < \infty.$$

To prove the converse, we first show that $f$ has a holomorphic extension to whole of $\mathbb{C}^2 \times \mathbb{C}^*.$ Recall that we have $$ \left( \hat{f}(a) e_n \right) (\theta) = \widetilde{f_n}(ae^{i\theta}) e^{in\theta}.$$ Expanding $\widetilde{f_n}(ae^{i\theta})$ into Fourier series we have $$ \widetilde{f_n}(ae^{i\theta}) = \sum_{k \in \mathbb{Z}} C_{a,n}(k) e^{ik\theta}.$$ Hence $\left( U_{(z,w)}^a \hat{f}(a) e_n \right) (\theta)$ is given by $$ \sum_{k \in \mathbb{Z}} C_{a,n}(k) e^{i\langle x,ae^{i\theta} \rangle} e^{-\langle y,ae^{i\theta} \rangle} w^{-(k+n)} e^{i(k+n)\theta}.$$ Thus $$ \int_{|y|=r} \int_{S^1} \left| \left[ U_{(z,w)}^a \hat{f}(a) e_n \right] (\theta)\right|^2 d\theta d\sigma_r(y) = J_0(2ira) \sum_{k \in \mathbb{Z}} |C_{a,n}(k)|^2 |w|^{-(k+n)}.$$ Notice that $J_0(2ira) \sim e^{2ra}$ for large $a$ and $\ds{\sum_{k \in \mathbb{Z}} |C_{a,n}(k)|^2 = \int_{S^1} |\widetilde{f_n}(ae^{i\theta})|^2 d\theta}.$ If $e^{-r} < |w| < e^r,$ we obtain $$ \sum_{n \in \mathbb{Z}} \left( \int_0^\infty \int_{S^1} |\widetilde{f_n}(ae^{i\theta})|^2 e^{2ra} d\theta ada \right) |w|^{2n} < \infty,$$ which implies $$ \sum_{n \in \mathbb{Z}} \left( \int_{\mathbb{R}^2} |\widetilde{f_n}(\xi)|^2 e^{2r|\xi|} d\xi \right) |w|^{2n} < \infty.$$ Since this is true for all $r>0$ and $w \in \mathbb{C}^*,$ by Lemma \ref{lemma} $f$ extends holomorphically to $\mathbb{C}^2 \times \mathbb{C}^*.$ It follows that $f^m(x,e^{i\alpha})$ defined by $$ f^m(x,e^{i\alpha}) = \int_{S^1} f(e^{i\theta}x,e^{i\alpha}) e^{-im\theta} d\theta$$ also extends holomorphically to $\mathbb{C}^2 \times \mathbb{C}^*$ and $$\int_{M(2)} |f^m((x+iy),|w|e^{i\alpha})|^2 dx d\alpha < \infty.$$ Now the proof can be completed using the identity (\ref{e3}), orthogonality of $U_{(z,w)}^a \widehat{f^m}(a)$ (see (\ref{e6})) and (\ref{e4}).

\end{proof}

\section{A Paley-Wiener theorem for the inverse Fourier transform}

Recall from the introduction that the 'group Fourier Transform' $\hat{f}(\xi),$ for $\xi \in \mathbb{R}^2$ of $f \in L^1(M(2))$ is an integral operator with the kernel $k_f(\xi, e^{i\alpha}, e^{i\beta})$ where $k_f(\xi, e^{i\alpha}, e^{i\beta}) = \tilde{f}(e^{i\beta}\xi, e^{i(\beta - \alpha)}), ~ \tilde{f}$ being the Euclidean Fourier Transform of $f$ in the $\mathbb{R}^2$-variable. We have the following Paley-Wiener theorem for the inverse Fourier Transform :

\begin{thm}
Let $f \in L^1(M(2))$ be such that $\hat{f}(\xi) \equiv 0 ~~ \forall ~  ~ |\xi| > R$ and the kernel $k_f$ of $\hat{f}(\xi)$ is smooth on $\mathbb{R}^2 \times S^1 \times S^1.$ Then $x \rightarrow f(x,e^{i\alpha})$ extends to an entire function of exponential type $R$ such that \be \label{eqn1} \sup_{e^{i\alpha} \in S^1} |z^m f(z,e^{i\alpha})| \leq c_m e^{R|Im z|}, ~~ \forall ~  z \in \mathbb{C}^2, ~ \forall ~  m \in \mathbb{N}^2 .\ee Conversely, if $f$ extends to an entire function on $\mathbb{C}^2$ in the first variable and satisfies (\ref{eqn1}) then $\hat{f}(\xi) = 0$ for $|\xi|>R$ and $k_f$ is smooth on $\mathbb{R}^2$.
\end{thm}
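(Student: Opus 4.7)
The plan is to reduce the statement to the classical Euclidean Paley-Wiener theorem (III) of the introduction by translating all conditions on the group Fourier transform $\hat f$ into conditions on the Euclidean Fourier transform $\tilde f(\eta, e^{i\gamma})$ of $f$ in the $\mathbb R^2$-variable. The starting point is the explicit formula stated just before the theorem,
$$k_f(\xi, e^{i\alpha}, e^{i\beta}) = \tilde f(e^{i\beta}\xi, e^{i(\beta-\alpha)}).$$
Since $|e^{i\beta}\xi|=|\xi|$, the vanishing $\hat f(\xi)\equiv 0$ for $|\xi|>R$ is equivalent to $\tilde f(\eta, e^{i\gamma})=0$ whenever $|\eta|>R$, for every $e^{i\gamma}\in S^1$. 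Smoothness of $k_f$ on $\mathbb R^2\times S^1\times S^1$ translates, via the smooth change of variables $(\xi,e^{i\alpha},e^{i\beta})\mapsto (e^{i\beta}\xi, e^{i(\beta-\alpha)}, e^{i\beta})$, to smoothness of $\tilde f$ on $\mathbb R^2\times S^1$.

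For the direct direction, fix $e^{i\alpha}\in S^1$. By the above reduction, $\tilde f(\cdot, e^{i\alpha})\in C_c^\infty(\mathbb R^2)$ with support in $\{|\eta|\le R\}$, so Fourier inversion gives
$$f(x, e^{i\alpha}) = \frac{1}{(2\pi)^2}\int_{|\eta|\le R} \tilde f(\eta, e^{i\alpha})\, e^{ix\cdot\eta}\, d\eta,$$
which extends entirely in $x\to z$. Using the identity $z^m e^{iz\cdot\eta} = (-i)^{|m|}\partial^m_\eta e^{iz\cdot\eta}$ and integrating by parts (no boundary terms, thanks to the compact support) yields
$$|z^m f(z, e^{i\alpha})| \le \frac{e^{R|\mathrm{Im}\, z|}}{(2\pi)^2}\int_{|\eta|\le R} |\partial^m_\eta \tilde f(\eta, e^{i\alpha})|\, d\eta.$$
The supremum over $e^{i\alpha}\in S^1$ of the right-hand integral is finite because $\partial^m_\eta\tilde f$ is continuous on the compact set $\overline{B_R(0)}\times S^1$, producing the constants $c_m$ in (\ref{eqn1}).

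For the converse, fix $e^{i\alpha}\in S^1$. The bound (\ref{eqn1}) says $|f(z, e^{i\alpha})|\le C_N(1+|z|)^{-N}e^{R|\mathrm{Im}\, z|}$ for every $N\ge 0$, so the classical Paley-Wiener theorem (III), applied to the entire function $z\mapsto f(z, e^{i\alpha})$, produces $\tilde f(\cdot, e^{i\alpha})\in C_c^\infty(\mathbb R^2)$ with support in $\{|\eta|\le R\}$. Running the dictionary in reverse yields $k_f(\xi, e^{i\alpha}, e^{i\beta})=0$ for $|\xi|>R$, hence $\hat f(\xi)=0$ for $|\xi|>R$, together with the smoothness of $\xi\mapsto k_f(\xi, e^{i\alpha}, e^{i\beta})$ on $\mathbb R^2$.

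The proof is largely a matter of carefully passing hypotheses through the change-of-variables formula for $k_f$, and the only real subtlety is ensuring that the uniform-in-$\alpha$ nature of the bounds in (\ref{eqn1}) is preserved through the integration by parts and the classical Paley-Wiener step. This uniformity is automatic here because the $c_m$ emerge as integrals of a single continuous function on a compact set, and the classical result (III) is applied pointwise in $e^{i\alpha}$; no additional regularity in the $S^1$-variable needs to be tracked in the converse, which is why the theorem's converse only asserts $\xi$-smoothness of $k_f$.
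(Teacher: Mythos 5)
Your proposal is correct and follows essentially the same route as the paper: translate the hypotheses on $\hat f$ into statements about the Euclidean transform $\tilde f(\cdot,e^{i\alpha})$ via the kernel formula, apply the classical Paley--Wiener theorem pointwise in $e^{i\alpha}$, and obtain the constants $c_m$ from the integration-by-parts identity $z^m f(z,e^{i\alpha})=\int_{|\xi|\le R}\partial_\xi^m\tilde f(\xi,e^{i\alpha})e^{i\xi\cdot z}\,d\xi$. Your treatment is in fact slightly more explicit than the paper's about why the bounds are uniform in $e^{i\alpha}$ (continuity of $\partial^m_\eta\tilde f$ on the compact set $\overline{B_R(0)}\times S^1$), but the argument is the same.
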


\begin{proof}
We have $$ (\hat{f}(\xi)F)(e^{i\alpha}) = \int_{S^1} k_f(\xi, e^{i\alpha}, e^{i\beta}) F(e^{i\beta}) d\beta , ~~ \txt{ for } F \in L^2(S^1)$$ where $k_f(\xi, e^{i\alpha}, e^{i\beta}) = \tilde{f}(e^{i\beta}\xi, e^{i(\beta - \alpha)}).$

Assume that $\hat{f}(\xi) \equiv 0 ~~ \forall ~ |\xi| > R.$ Since $k_f$ is smooth we have $\tilde{f}(\cdot, e^{i\alpha}) \in \mathcal{C}_c^\infty(\mathbb{R}^2).$ By the Paley-Wiener theorem on $\mathbb{R}^2$ we obtain that $f(\cdot, e^{i\alpha})$ extends to an entire function on $\mathbb{C}^2$ of exponential type. Moreover, if $m \in \mathbb{N}^2$ $$ z^m f(z,e^{i\alpha}) = \int_{|\xi| \leq R} \frac{\partial^m \tilde{f} }{\partial \xi^m} (\xi, e^{i\alpha}) e^{i \xi \cdot z} d\xi .$$

It follows that $$ \sup_{e^{i\alpha} \in S^1} |z^m f(z, e^{i\alpha})| \leq \sup_{e^{i\alpha} \in S^1} \left \|  \frac{\partial^m \tilde{f} }{\partial \xi^m} (\cdot, e^{i\alpha}) \right \|_1 e^{R |Im z|} .$$

Conversely, if $f$ is holomorphic on $\mathbb{C}^2$ and satisfies (\ref{eqn1}), by the Paley-Wiener theorem on $\mathbb{R}^2$ we get $\tilde{f}(\cdot, e^{i\alpha}) \in \mathcal{C}_c^\infty(\mathbb{R}^2)$ and $\tilde{f}(\xi, e^{i\alpha}) = 0 ~ \forall ~ |\xi| > R.$ Hence $\hat{f}(\xi) \equiv 0 ~~ \forall ~ |\xi| > R.$ Moreover, $k_f$ is smooth on $\mathbb{R}^2$ since $\tilde{f}$ is smooth.
\end{proof}

\end{document}